\definecolor{webgreen}{rgb}{0,.5,0}
\definecolor{webbrown}{rgb}{.6,0,0}
\definecolor{red}{rgb}{1,0,0}
\let\up=\textsuperscript
\let\epsilon=\varepsilon
\def\C{{\mathbb C}}
\def\N{{\mathbb N}}
\def\c{\mathrm{c}}
\def\idem{\leavevmode\hbox to 10.6mm{\vrule height .63ex depth -.59ex
    width 10mm\hfill}}
\newcommand{\pinf}[1]{^{\underline{#1}}}
\newcommand{\vabs}[1]{\left\vert{#1}\right\vert}
\begin{document}

\theoremstyle{plain}
\numberwithin{equation}{section}
\newtheorem{thm}{Theorem}[section]
\newtheorem{theorem}[thm]{Theorem}
\newtheorem{lemma}[thm]{Lemma}
\newtheorem{prop}[thm]{Proposition}
\newtheorem{coll}[thm]{Corollary}
\newtheorem{conj}[thm]{Conjecture}

\newtheorem{thmn}{Theorem}     
\newtheorem{colln}[thmn]{Corollary}  
\newtheorem{propn}{Proposition} 

\newtheorem*{thmnn}{Theorem}

\theoremstyle{definition}
\newtheorem{defi}[thm]{Definition}

\theoremstyle{remark}
\newtheorem{rmk}[thm]{Remark}
\newtheorem{rmks}[thm]{Remarks}
\newtheorem{expl}[thm]{Example}
\newtheorem{expls}[thm]{Examples}

\begin{center}
\vskip 1cm{\Large\bf New Formulas Involving Bernoulli and Stirling \\[3mm] Numbers of Both Kinds 
}
\vskip 1cm
\large
Bakir Farhi \\
National Higher School of Mathematics \\
P.O.Box 75, Mahelma 16093, Sidi Abdellah (Algiers) \\
Algeria \\
\href{mailto:bakir.farhi@nhsm.edu.dz}{\tt bakir.farhi@nhsm.edu.dz}
\end{center}

\vskip .2 in

\begin{abstract}
This paper is devoted to establishing several new formulas relating Bernoulli and Stirling numbers of both kinds.
\end{abstract}

\section{Introduction and Notation}\label{sec1}
Throughout this paper, we let $\N$ and $\N_0$ respectively denote the set of positive integers and the set of nonnegative integers. For $n \in \N_0$, we let $X\pinf{n}$ denote \textit{the falling factorial of $X$ to depth $n$}, that is 
$$
X\pinf{n} := X (X - 1) \cdots (X - n + 1) .
$$
The Stirling numbers of the first kind $s(n , k)$ ($n , k \in \N_0$, $n \geq k$) are then defined through the polynomial identity:
\begin{equation}\label{eq1}
X\pinf{n} = \sum_{k = 0}^{n} s(n , k) X^k ~~~~~~~~~~ (\forall n \in \N_0) ,
\end{equation}
while the Stirling numbers of the second kind $S(n , k)$ ($n , k \in \N_0$, $n \geq k$) are defined through the inverse identity:
\begin{equation}\label{eq2}
X^n = \sum_{k = 0}^{n} S(n , k) X\pinf{k} ~~~~~~~~~~ (\forall n \in \N_0) .
\end{equation}
For algebraic and combinatorial reasons, it is also considered that $s(n , k) = S(n , k) = 0$ for all $n , k \in \N_0$, with $n < k$. From \eqref{eq1} and \eqref{eq2}, we immediately derive the following recurrence formulas allowing to compute, step by step, the numbers $s(n , k)$ and $S(n , k)$:
\begin{align}
s(n , k) & = s(n - 1 , k - 1) - (n - 1) \, s(n - 1 , k) , \label{eq3} \\
S(n , k) & = S(n - 1 , k - 1) + k \, S(n - 1 , k) \label{eq4}
\end{align}
(valid for all $n , k \in \N$). By inserting \eqref{eq2} into \eqref{eq1} and conversely, we also obtain the orthogonality relations
\begin{align}
\sum_{m \leq i \leq n} s(n , i) S(i , m) & = \delta_{n m} , \label{eq5} \\
\sum_{m \leq i \leq n} S(n , i) s(i , m) & = \delta_{n m} \label{eq6}
\end{align}
(valid for all $n , m \in \N_0$ with $m \leq n$), where $\delta_{n m}$ denotes the Kronecker delta.  For additional formulas and combinatorial interpretations regarding Stirling numbers of both kinds, we refer to \cite{com,stan}. Further, by substituting in \eqref{eq1} $X$ by $(- X)$ and rearranging, we get (for all $n \in \N_0$)
$$
X (X + 1) \cdots (X + n - 1) = \sum_{k = 0}^{n} (-1)^{n - k} s(n , k) X^k ,
$$
implying that the integers $(-1)^{n - k} s(n , k)$ ($n , k \in \N_0$, $n \geq k$) are all nonnegative, that is the sign of $s(n , k)$ is $(-1)^{n - k}$. Thus the above polynomial identity is equivalent to
\begin{equation}\label{eq17}
X (X + 1) \cdots (X + n - 1) = \sum_{k = 0}^{n} \vabs{s(n , k)} X^k ~~~~~~~~~~ (\forall n \in \N_0) . 
\end{equation}
For $n \in \N$, dividing both sides of the above identity by $X$ and rearranging gives
\begin{equation}\label{eq18}
(X + 1) (X + 2) \cdots (X + n - 1) = \sum_{k = 0}^{n - 1} \vabs{s(n , k + 1)} X^k ~~~~~~~~~~ (\forall n \in \N) . 
\end{equation}

Next, the Bernoulli polynomials $B_n(X)$ ($n \in \N_0$) can be defined by their exponential generating function
\begin{equation}\label{eq7}
\frac{t e^{X t}}{e^t - 1} = \sum_{n = 0}^{+ \infty} B_n(X) \frac{t^n}{n!}
\end{equation}
and the Bernoulli numbers $B_n$ ($n \in \N_0$) are the values of the Bernoulli polynomials at $X = 0$; that is $B_n := B_n(0)$ ($\forall n \in \N_0$). To differentiate between the Bernoulli polynomials and the Bernoulli numbers, we always put the indeterminate $X$ in evidence when referring to polynomials. For a comprehensive and modern treatment of Bernoulli polynomials and numbers, see \cite{kou}. Similarly, the Bernoulli numbers of the second kind $B_n^*$ ($n \in \N_0$) are defined by their exponential generating function
\begin{equation}\label{eq8}
\frac{t}{\log(1 + t)} = \sum_{n = 0}^{+ \infty} B_n^* \, \frac{t^n}{n!} .
\end{equation}
A simple and useful formula for computing the $B_n^*$'s is given by \cite{rom}:
\begin{equation}\label{eq13}
B_n^* = \int_{0}^{1} x\pinf{n} \, d x ~~~~~~~~~~ (\forall n \in \N_0) .
\end{equation}
To avoid ambiguity, we often refer to Bernoulli numbers as ``Bernoulli numbers of the first kind'' in this paper. 

We frequently use the linear operators $I$, $\tau_r$ ($r \in \C$), $D$, and $\Delta$ of $\C[X]$, which respectively denote the identity of $\C[X]$, the translation by $r$, the differential operator ($D := \frac{d}{d \, X}$), and the forward difference operator ($\Delta := \tau_1 - I$). Applying the operators $D$ and $\Delta$ to certain types of polynomials yields remarkable and useful formulas, such as:
\begin{align}
\Delta X\pinf{n} & = n X\pinf{n - 1} , \label{eq9} \\
\Delta B_n(X) & = n X^{n - 1} , \label{eq10} \\
B_n'(X) & = n B_{n - 1}(X) \label{eq11}  
\end{align}
(valid for all $n \in \N$). Reiterating \eqref{eq11}, we derive that $B_n^{(k)}(0) = n\pinf{k} B_{n - k}$ ($\forall n , k \in \N_0$, with $k \leq n$). Hence, by the Taylor formula, we get
\begin{equation}\label{eq14}
B_n(X) = \sum_{k = 0}^{n} \binom{n}{k} B_{n - k} X^k ~~~~~~~~~~ (\forall n \in \N_0) .
\end{equation}
From \eqref{eq10} and \eqref{eq11}, we also derive that
\begin{equation}\label{eq12}
\int_{0}^{1} B_n(x) \, d x = 0 ~~~~~~~~~~ (\forall n \in \N) .
\end{equation} 
Additionally, all these operators can be expressed both in terms of $D$ and in terms of $\Delta$. Indeed, the Taylor formula shows that $\tau_r = e^{r D}$ ($\forall r \in \C$), thus $\Delta = \tau_1 - I = e^D - I$, implying that $D = \log(I + \Delta)$. The relationships between different linear operators play a vital role in exploring new identities involving special types of polynomials and numbers, such as those studied here. Although the operators $D$ and $\Delta$ do not have proper inverses, each has an inverse up to an additive constant. By abuse of notation, we let $D^{-1}$ (or $\int$) and $\Delta^{-1}$ respectively denote the inverses of $D$ and $\Delta$ (defined up to an additive constant). So, from Formulas \eqref{eq9} and \eqref{eq10}, we immediately derive the following formulas which hold up to an additive constant and for all $n \in \N_0$:
\begin{align}
\Delta^{-1} X\pinf{n} & = \frac{X\pinf{n + 1}}{n + 1} , \label{eq19} \\[1mm]
\Delta^{-1} X^n & = \frac{B_{n + 1}(X)}{n + 1} . \label{eq20}
\end{align}

The Bernoulli numbers were first introduced by Jacob Bernoulli in the late 17\up{th} century to establish a closed form for the sum of powers of consecutive natural numbers. Since then, they have become fundamental in various branches of mathematics, including number theory, mathematical analysis, and algebraic topology, among others.

The Bernoulli numbers of the second kind, introduced by James Gregory in 1670 and rediscovered several times by renowned mathematicians of the 19\up{th} century (see \cite{bla}), have recently reemerged as a subject of study (see e.g., \cite{nem,qz1,qz2}). However, it is worth noting that some authors define the Bernoulli numbers of the second kind as $\frac{B_n^*}{n!}$ rather than $B_n^*$. In this paper, we follow Roman's approach \cite[p. 113-114]{rom}.   

The goal of this paper is to establish new formulas relating Bernoulli and Stirling numbers of both kinds. Specifically, Corollary \ref{c1} and Theorem \ref{t2} provide formulas expressing Bernoulli numbers of the first kind in terms of Bernoulli numbers of the second kind, and vice versa. Many of our results come in pairs, where each result about Bernoulli numbers of the first kind has a corresponding result for Bernoulli numbers of the second kind. For instance, for Corollary \ref{c2}, Corollary \ref{c4}, and Theorem \ref{t5} correspond respectively Theorem \ref{t4}, Corollary \ref{c5}, and Theorem \ref{t6}. Remarkably, and up to some details, to a formula involving Bernoulli and Stirling numbers of both kinds, there corresponds a formula obtained by switching the roles of the two kinds of Bernoulli and Stirling numbers. The pairs of results that perfectly illustrate this phenomenon are Corollary \ref{c1} with Theorem \ref{t2} and Corollary \ref{c2} with Theorem \ref{t4}.

\section{The results and the proofs}\label{sec2}

We begin with the following theorem which provides, for a given $n \in \N_0$, the expression of the Bernoulli polynomial $B_n(X)$ as a linear combination of the polynomials $X\pinf{k}$ ($0 \leq k \leq n$).

\begin{thm}\label{t1}
For all $n \in \N_0$, we have
$$
B_n(X) = B_n + \sum_{k = 1}^{n} \frac{n}{k} S(n - 1 , k - 1) X\pinf{k} .
$$
\end{thm}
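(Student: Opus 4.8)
The plan is to exploit the operator identity $\Delta = e^D - I$, equivalently $D = \log(I + \Delta)$, to relate the forward-difference antiderivative of $X^{n-1}$ to that of falling factorials. Starting from \eqref{eq20}, I have $\Delta^{-1} X^{n-1} = \frac{1}{n} B_n(X)$ (up to an additive constant), so it suffices to compute $\Delta^{-1} X^{n-1}$ in the falling-factorial basis and then fix the additive constant using \eqref{eq12}. The key observation is that \eqref{eq2} expands $X^{n-1} = \sum_{j=0}^{n-1} S(n-1,j) X\pinf{j}$, and then \eqref{eq19} gives $\Delta^{-1} X\pinf{j} = \frac{X\pinf{j+1}}{j+1}$ up to a constant. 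Combining these,
\begin{equation*}
\Delta^{-1} X^{n-1} = \sum_{j=0}^{n-1} \frac{S(n-1,j)}{j+1} X\pinf{j+1} + c
= \sum_{k=1}^{n} \frac{S(n-1,k-1)}{k} X\pinf{k} + c
\end{equation*}
for some constant $c$, after reindexing $k = j+1$. Multiplying by $n$ gives $B_n(X) = \sum_{k=1}^{n} \frac{n}{k} S(n-1,k-1) X\pinf{k} + nc$.

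It remains to identify the constant $nc$. The cleanest route is to evaluate at $X = 0$: since $X\pinf{k} = 0$ at $X=0$ for every $k \geq 1$, the sum vanishes and we get $B_n(0) = nc$, i.e. $nc = B_n$, which is exactly the claimed constant term. One should handle the degenerate cases $n = 0$ and $n = 1$ separately (for $n=0$ the formula reads $B_0(X) = B_0 = 1$ with an empty sum, which is correct; for $n=1$ it reads $B_1(X) = B_1 + X\pinf{1} = -\tfrac12 + X$, also correct), since the manipulation via $\Delta^{-1} X^{n-1}$ tacitly assumes $n \geq 1$, and for $n=1$ one uses $S(0,0)=1$.

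Alternatively, and perhaps more transparently, I could prove it by a direct verification using the characterizing properties of $B_n(X)$: a polynomial $P$ of degree $n$ equals $B_n(X)$ if and only if $\Delta P = n X^{n-1}$ and $\int_0^1 P(x)\,dx = 0$ (for $n \geq 1$). Setting $P(X) = B_n + \sum_{k=1}^{n} \frac{n}{k} S(n-1,k-1) X\pinf{k}$, applying $\Delta$ and using \eqref{eq9} yields $\Delta P = \sum_{k=1}^{n} \frac{n}{k} S(n-1,k-1)\, k\, X\pinf{k-1} = n \sum_{k=1}^{n} S(n-1,k-1) X\pinf{k-1} = n X^{n-1}$ by \eqref{eq2}; then one checks the integral normalization, which essentially re-derives the value of the constant and may require a small computation with $\int_0^1 X\pinf{k}\,dx = B_k^*$ from \eqref{eq13} together with \eqref{eq12}.

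I expect the only real subtlety to be the bookkeeping of the additive constant in the operator $\Delta^{-1}$ and confirming that the constant term comes out as $B_n$ rather than something else; the evaluation-at-zero trick makes this immediate, so the proof should be short. The edge cases $n = 0, 1$ deserve an explicit sentence but pose no genuine obstacle.
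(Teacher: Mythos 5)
Your proposal is correct and follows essentially the same route as the paper: the paper applies $\Delta$ to $B_n(X)$, expands $nX^{n-1}$ in the falling-factorial basis via \eqref{eq2}, recognizes the result as $\Delta$ of the claimed sum using \eqref{eq9}, and fixes the additive constant by evaluating at $X=0$ --- which is exactly your $\Delta^{-1}$ computation read in the other direction. The evaluation-at-zero step and the treatment of the constant are identical, so there is nothing further to flag.
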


\begin{proof}
The identity of the theorem is clearly true for $n = 0$. Next, let $n \in \N$ be fixed. According to \eqref{eq10}, \eqref{eq2}, and \eqref{eq9}, we have
\begin{align*}
\Delta B_n(X) & = n X^{n - 1} \\
& = n \sum_{k = 0}^{n - 1} S(n - 1 , k) X\pinf{k} \\
& = n \sum_{k = 0}^{n - 1} \frac{S(n - 1 , k)}{k + 1} \Delta X\pinf{k + 1} \\
& = \Delta\left(n \sum_{k = 0}^{n - 1} \frac{S(n - 1 , k)}{k + 1} X\pinf{k + 1}\right) \\
& = \Delta\left(n \sum_{k = 1}^{n} \frac{S(n - 1 , k - 1)}{k} X\pinf{k}\right) . 
\end{align*}
Consequently, there exists a complex constant $\c$ satisfying the polynomial identity:
$$
B_n(X) = n \sum_{k = 1}^{n} \frac{S(n - 1 , k - 1)}{k} X\pinf{k} + \c .
$$
By taking in the last identity $X = 0$, we get $\c = B_n(0) = B_n$, confirming the required identity of the theorem.
\end{proof}

From Theorem \ref{t1}, we derive the following corollary which provides, for a given $n \in \N$, an expression of the Bernoulli number of the first kind $B_n$ in terms of the Bernoulli numbers of the second kind $B_k^*$ ($1 \leq k \leq n$) and Stirling numbers of the second kind.

\begin{coll}\label{c1}
For all $n \in \N$, we have
$$
B_n = - \sum_{k = 1}^{n} \frac{n}{k} S(n - 1 , k - 1) B_k^* .
$$
\end{coll}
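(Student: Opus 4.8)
The plan is to integrate the polynomial identity of Theorem~\ref{t1} over the interval $[0,1]$ and exploit the two integral facts already recorded in the excerpt. Specifically, Formula~\eqref{eq12} gives $\int_0^1 B_n(x)\,dx = 0$ for every $n \in \N$, while Formula~\eqref{eq13} identifies $\int_0^1 x\pinf{k}\,dx$ with the Bernoulli number of the second kind $B_k^*$. So the whole argument is essentially one line of bookkeeping once Theorem~\ref{t1} is in hand.

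First I would fix $n \in \N$ and write down the identity of Theorem~\ref{t1}:
\begin{equation*}
B_n(X) = B_n + \sum_{k=1}^{n} \frac{n}{k}\, S(n-1,k-1)\, X\pinf{k} .
\end{equation*}
Next I would integrate both sides from $0$ to $1$. On the left, $\int_0^1 B_n(x)\,dx = 0$ by \eqref{eq12} (here we use $n \geq 1$). On the right, the constant term contributes $B_n$, and integrating term by term gives $\sum_{k=1}^{n} \frac{n}{k}\, S(n-1,k-1) \int_0^1 x\pinf{k}\,dx$. Applying \eqref{eq13} to each integral, $\int_0^1 x\pinf{k}\,dx = B_k^*$, so the right-hand side becomes $B_n + \sum_{k=1}^{n} \frac{n}{k}\, S(n-1,k-1)\, B_k^*$.

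Equating the two sides yields $0 = B_n + \sum_{k=1}^{n} \frac{n}{k}\, S(n-1,k-1)\, B_k^*$, and rearranging gives exactly the claimed formula $B_n = -\sum_{k=1}^{n} \frac{n}{k}\, S(n-1,k-1)\, B_k^*$. There is no real obstacle here: the only points requiring a moment's care are that the range $n \in \N$ (rather than $\N_0$) is exactly what \eqref{eq12} needs, and that the finite sum lets us interchange integral and summation freely. I expect the "hard part" to have already been absorbed into the proof of Theorem~\ref{t1}; the corollary itself is a routine integration.
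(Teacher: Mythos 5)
Your proposal is correct and follows exactly the paper's own argument: integrate the identity of Theorem~\ref{t1} over $[0,1]$ and apply Formulas~\eqref{eq12} and~\eqref{eq13}. The paper states this in one sentence; your write-up just spells out the same routine bookkeeping.
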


\begin{proof}
It suffices to integrate from $0$ to $1$ the polynomial identity of Theorem \ref{t1} and use Formulas \eqref{eq12} and \eqref{eq13}.
\end{proof}

Now, we invert the formulas in Theorem \ref{t1} and Corollary \ref{c1}; that is, we express the polynomial $X\pinf{n}$ ($n \in \N_0$) as a linear combination of Bernoulli polynomials, and we express the Bernoulli number of the second kind $B_n^*$ ($n \in \N_0$) in terms of Bernoulli and Stirling numbers of the first kind. To do so, we need the following lemma.

\begin{lemma}\label{l1}
Let ${(u_n)}_{n \in \N_0}$ and ${(v_n)}_{n \in \N_0}$ be two complex sequences. Then the two following identities are equivalent:
\begin{align}
u_n & = \sum_{k = 0}^{n} s(n , k) v_k ~~~~~~~~~~ (\forall n \in \N_0) , \tag{$I$} \\
v_n & = \sum_{k = 0}^{n} S(n , k) u_k ~~~~~~~~~~ (\forall n \in \N_0) . \tag{$II$}
\end{align}
\end{lemma}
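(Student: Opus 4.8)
The plan is to prove Lemma~\ref{l1} as a direct consequence of the orthogonality relations \eqref{eq5} and \eqref{eq6}, which are precisely the statement that the infinite lower-triangular matrices $\bigl(s(n,k)\bigr)_{n,k}$ and $\bigl(S(n,k)\bigr)_{n,k}$ are mutual inverses. First I would assume $(I)$ holds and substitute it into the right-hand side of $(II)$: for a fixed $n \in \N_0$, one writes
$$
\sum_{k=0}^{n} S(n,k) u_k = \sum_{k=0}^{n} S(n,k) \sum_{i=0}^{k} s(k,i) v_i .
$$
Then I would swap the order of summation, collecting the coefficient of each $v_i$; since $s(k,i) = 0$ whenever $k < i$, the inner sum over $k$ runs from $i$ to $n$, giving $\sum_{i=0}^{n} v_i \sum_{i \leq k \leq n} S(n,k) s(k,i)$. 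By \eqref{eq6} the inner sum equals $\delta_{ni}$, so the whole expression collapses to $v_n$, which is exactly $(II)$. The converse direction, assuming $(II)$ and deriving $(I)$, is entirely symmetric: substitute $(II)$ into the right-hand side of $(I)$, interchange summations, and invoke \eqref{eq5} instead of \eqref{eq6}.

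There is essentially no hard part here — the content of the lemma \emph{is} the orthogonality of the two Stirling triangles — but the one point that deserves care is the legitimacy of the interchange of summation order and the bookkeeping of the index ranges. Because every sum involved is finite (all indices are bounded by the fixed $n$, and the vanishing conventions $s(n,k) = S(n,k) = 0$ for $n < k$ keep the ranges honest), Fubini-type concerns do not arise; one only needs to state clearly that the double sum $\sum_{0 \le i \le k \le n}$ can be reorganized as $\sum_{i=0}^{n}\sum_{k=i}^{n}$. I would write this out explicitly in one direction and then remark that the other direction follows \emph{mutatis mutandis} by the symmetry between \eqref{eq5} and \eqref{eq6}, so as not to duplicate a nearly identical computation.

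One alternative framing, which I might mention but would not adopt as the main argument, is to phrase the equivalence in operator or generating-function language: identity $(I)$ says the ordinary generating function manipulations tie $u_n$ to $v_n$ through the change of basis $X^n \leftrightarrow X\pinf{n}$, and \eqref{eq1}--\eqref{eq2} already encode that these bases are related by exactly the matrices $s(n,k)$ and $S(n,k)$. That viewpoint makes the result feel inevitable, but the cleanest and most self-contained write-up is the direct double-sum computation using \eqref{eq5} and \eqref{eq6}, so that is the route I would take.
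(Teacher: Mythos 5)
Your proof is correct and is exactly the argument the paper has in mind: the paper's proof of Lemma~\ref{l1} simply says to use the orthogonality relations \eqref{eq5} and \eqref{eq6} (citing \cite{com} and \cite{rio} for details), and your substitution-plus-interchange-of-summation computation is the standard way to carry that out. Nothing further is needed.
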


\begin{proof}
Use the orthogonality relations \eqref{eq5} and \eqref{eq6} (see e.g., \cite{com} or \cite{rio} for the details).
\end{proof}

By applying Lemma \ref{l1} for the formulas in Theorem \ref{t1} and Corollary \ref{c1}, we obtain the following results.

\begin{thm}\label{t2}
For all $n \in \N$, we have
$$
B_n^* = - \sum_{k = 1}^{n} \frac{n}{k} s(n - 1 , k - 1) B_k .
$$
\end{thm}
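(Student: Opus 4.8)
The plan is to derive Theorem \ref{t2} from the preceding results by inverting the pair of formulas in Theorem \ref{t1} and Corollary \ref{c1} using Lemma \ref{l1}, exactly as the narrative preceding the statement suggests. First I would aim to recast the polynomial identity of Theorem \ref{t1} into a relation between two number sequences to which Lemma \ref{l1} applies. The natural move is to consider, for fixed $n$, the coefficients obtained by extracting the $X\pinf{k}$-components; more directly, since Theorem \ref{t1} expands $B_n(X)$ in the falling-factorial basis, one can conversely expand $X\pinf{n}$ in the Bernoulli basis $\{B_k(X)\}$, and this should be the inverse relation governed by the Stirling matrices. So the first step is to prove the companion polynomial identity
$$
X\pinf{n} = \sum_{k = 0}^{n} \binom{n}{k} \big(\text{something}\big) B_k(X),
$$
or, more cleanly, to set up sequences $u_n, v_n$ so that Theorem \ref{t1} reads as identity $(II)$ of Lemma \ref{l1} and hence identity $(I)$ holds automatically.

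Concretely, here is the cleaner route I would actually take. Fix a real number $x$ (or work formally) and define $v_k := B_k(x) - B_k$ for $k \geq 1$ and handle $k=0$ separately; Theorem \ref{t1} says $v_n = \sum_{k=1}^n \tfrac{n}{k} S(n-1,k-1) x\pinf{k}$. The awkward coefficient $\tfrac{n}{k} S(n-1,k-1)$ is not literally $S(n,k)$, so Lemma \ref{l1} does not apply verbatim; the key algebraic observation needed is the identity $\tfrac{n}{k} S(n-1,k-1)$ relating to the Stirling numbers in a way compatible with the lemma after absorbing the factor $\tfrac{n}{k}$ into renormalized sequences. I would therefore introduce $\widetilde{u}_n := \tfrac{B_n^*}{n}$-type normalizations (and correspondingly $\widetilde{v}_n := \tfrac{B_n}{n}$, with the $X\pinf{n}$ side divided by $n$) so that the transition matrix becomes exactly $\big(S(n-1,k-1)\big)$, whose inverse is $\big(s(n-1,k-1)\big)$ by the orthogonality relations \eqref{eq5}–\eqref{eq6} applied with shifted indices. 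After this renormalization, Lemma \ref{l1} (or its direct shifted analogue proved the same way from \eqref{eq5}–\eqref{eq6}) converts Corollary \ref{c1}, namely $\tfrac{B_n}{n} = -\sum_{k=1}^n \tfrac{S(n-1,k-1)}{k} B_k^* \cdot(\text{adjusted})$, into $\tfrac{B_n^*}{n} = -\sum_{k=1}^n \tfrac{s(n-1,k-1)}{k} B_k \cdot(\text{adjusted})$, which rearranges to the claimed formula $B_n^* = -\sum_{k=1}^n \tfrac{n}{k} s(n-1,k-1) B_k$.

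The main obstacle, and the step I would be most careful with, is precisely the bookkeeping around the factor $\tfrac{n}{k}$: Lemma \ref{l1} as stated is an inversion for the \emph{bare} Stirling matrices $\big(s(n,k)\big)_{0\le k\le n}$ and $\big(S(n,k)\big)_{0\le k\le n}$, whereas Theorem \ref{t1} and Corollary \ref{c1} involve $\tfrac{n}{k}S(n-1,k-1)$, i.e. the \emph{shifted} matrix $\big(S(n-1,k-1)\big)_{1\le k\le n}$ scaled row-wise by $n$ and column-wise by $1/k$. I need to check that this shifted matrix is still unitriangular-invertible with inverse $\big(s(n-1,k-1)\big)_{1\le k\le n}$ — which follows from \eqref{eq5}–\eqref{eq6} by replacing $n,m,i$ with $n-1,m-1,i-1$ — and that the diagonal scalings cancel correctly: the row scaling by $n$ and $1/k$ on one side is inverted by row scaling $1/n$ and $k$ on the other, so defining $u_n$ and $v_n$ to absorb a single factor (say $v_n = B_n/n$, $u_n = B_n^*/n$, and pairing with $x\pinf{k}/k$) makes everything go through. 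Once that normalization is pinned down, the rest is a mechanical application of Lemma \ref{l1} together with the already-proved Theorem \ref{t1} and Corollary \ref{c1}, plus the sign and the observation $B_0^\ast = B_0 = 1$ to settle the $n=1$ base consistency.
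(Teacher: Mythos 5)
Your proposal is correct and follows essentially the same route as the paper: the paper rewrites Corollary \ref{c1} as $\frac{B_{n+1}}{n+1} = \sum_{k=0}^{n} S(n,k)\left(-\frac{B_{k+1}^*}{k+1}\right)$ (i.e., it divides by $n$ and shifts both indices by one so that Lemma \ref{l1} applies verbatim) and then inverts, which is exactly your renormalization-plus-shifted-orthogonality argument in different clothing. Your bookkeeping of the factor $\frac{n}{k}$ and of the shifted Stirling matrices is the right resolution of the only delicate point.
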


\begin{proof}
First, remark that the formula in Corollary \ref{c1} can be rewritten as
$$
\frac{B_{n + 1}}{n + 1} = \sum_{k = 0}^{n} S(n , k) \left(- \frac{B_{k + 1}^*}{k + 1}\right) ~~~~~~~~~~ (\forall n \in \N_0) ,
$$
and then apply Lemma \ref{l1} to invert this last formula.
\end{proof}

\begin{thm}\label{t3}
For all $n \in \N_0$, we have
$$
X\pinf{n} = B_n^* + \sum_{k = 1}^{n} \frac{n}{k} s(n - 1 , k - 1) B_k(X) .
$$
\end{thm}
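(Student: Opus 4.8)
The plan is to run the argument of Theorem \ref{t1} ``in reverse'', swapping the roles of $s$ and $S$ and applying $\Delta^{-1}$ in the other direction. The case $n = 0$ is immediate since both sides equal $1$ (recall $B_0^* = \int_0^1 x\pinf{0}\,dx = 1$ by \eqref{eq13}), so fix $n \in \N$. Starting from \eqref{eq9} we have $\Delta X\pinf{n} = n X\pinf{n - 1}$; expanding $X\pinf{n-1}$ in the monomial basis via \eqref{eq1} and then rewriting each monomial through \eqref{eq10} as $X^k = \frac{1}{k+1}\Delta B_{k+1}(X)$, and finally pulling the linear operator $\Delta$ outside the sum, we obtain
$$
\Delta X\pinf{n} = \Delta\!\left(n \sum_{k = 0}^{n - 1} \frac{s(n - 1 , k)}{k + 1} B_{k + 1}(X)\right) = \Delta\!\left(n \sum_{k = 1}^{n} \frac{s(n - 1 , k - 1)}{k} B_k(X)\right) ,
$$
where the second equality is just the shift $k \mapsto k - 1$.

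Because the kernel of $\Delta$ on $\C[X]$ consists exactly of the constant polynomials, there is a constant $\c \in \C$ with $X\pinf{n} = n \sum_{k = 1}^{n} \frac{s(n - 1 , k - 1)}{k} B_k(X) + \c$. To identify $\c$ I would not evaluate at $X = 0$ (which would reintroduce the numbers $B_k$), but rather integrate the identity over $[0,1]$: the left-hand side yields $B_n^*$ by \eqref{eq13}, while by \eqref{eq12} every integral $\int_0^1 B_k(x)\,dx$ with $k \geq 1$ vanishes, so $\c = B_n^*$, which is the assertion.

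A second, equally routine route keeps the theorem within the orbit of Lemma \ref{l1}: replacing $n$ by $n + 1$ in Theorem \ref{t1} and dividing by $n + 1$ recasts it, after a shift of index, as $\frac{B_{n+1}(X) - B_{n+1}}{n + 1} = \sum_{k = 0}^{n} S(n , k)\,\frac{X\pinf{k+1}}{k + 1}$; Lemma \ref{l1} then inverts this to $\frac{X\pinf{n+1}}{n + 1} = \sum_{k = 0}^{n} s(n , k)\,\frac{B_{k+1}(X) - B_{k+1}}{k + 1}$, and Theorem \ref{t2} (again with $n \to n+1$) identifies the numerical part as $\sum_{k = 0}^{n} \frac{s(n,k)}{k+1} B_{k+1} = - \frac{B_{n+1}^*}{n + 1}$; substituting and renaming $n + 1$ as $n$ produces the formula. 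Neither approach hides a genuine difficulty; the only points needing care are the bookkeeping of the index shifts and the fact, used in the first route, that $\ker\Delta$ is reduced to the constants, which is what legitimizes pinning down $\c$ by integration.
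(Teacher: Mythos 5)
Both of your routes are correct. Your second route is precisely the paper's proof: rewrite Theorem \ref{t1} (with $n$ shifted to $n+1$) in the form $\frac{B_{n+1}(X)-B_{n+1}}{n+1}=\sum_{k=0}^{n}S(n,k)\frac{X\pinf{k+1}}{k+1}$, invert by Lemma \ref{l1}, and use Theorem \ref{t2} to recognize $-\sum_{k=0}^{n}\frac{s(n,k)}{k+1}B_{k+1}$ as $\frac{B_{n+1}^*}{n+1}$. Your first route, however, is genuinely different and arguably cleaner: it is a direct transposition of the operator argument used for Theorem \ref{t1}, expanding $\Delta X\pinf{n}=nX\pinf{n-1}$ in the monomial basis via \eqref{eq1}, converting $X^k$ to $\frac{1}{k+1}\Delta B_{k+1}(X)$ via \eqref{eq10}, and pulling $\Delta$ outside; the observation that $\ker\Delta$ on $\C[X]$ is exactly the constants does pin down the answer up to a constant $\c$. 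Your choice to identify $\c$ by integrating over $[0,1]$ rather than evaluating at $X=0$ is the key point that makes this route self-contained — evaluation at $0$ would have forced you to invoke Theorem \ref{t2} to compute $-n\sum_{k=1}^{n}\frac{s(n-1,k-1)}{k}B_k$, whereas \eqref{eq12} and \eqref{eq13} deliver $\c=B_n^*$ directly. What the paper's inversion route buys is the structural symmetry with Theorems \ref{t1} and \ref{t2} (everything flows from one identity through Lemma \ref{l1}); what your first route buys is independence from Lemma \ref{l1}, Theorem \ref{t2}, and the minor subtlety of applying a lemma stated for real sequences to sequences of polynomials.
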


\begin{proof}
First, remark that the identity in Theorem \ref{t1} can be rewritten as
$$
\frac{B_{n + 1}(X) - B_{n + 1}}{n + 1} = \sum_{k = 0}^{n} S(n , k) \left(\frac{X\pinf{k + 1}}{k + 1}\right) ~~~~~~~~~~ (\forall n \in \N_0) ,
$$
and then apply Lemma \ref{l1} to invert this last identity and use Theorem \ref{t2}.
\end{proof}

From Theorem \ref{t1}, we also derive a formula expressing the Bernoulli numbers in terms of Stirling numbers of both kinds, a result previously pointed out by Quaintance and Gould \cite{qg} and, more recently, by Cereceda \cite{cer}. We have the following corollary:

\begin{coll}\label{c2}
For all $n , k \in \N$, with $n \geq k$, we have
$$
\sum_{k \leq i \leq n} \frac{S(n - 1 , i - 1) s(i , k)}{i} = \frac{1}{n} \binom{n}{k} B_{n - k} .
$$
\end{coll}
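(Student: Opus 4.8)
The plan is to expand the polynomial identity of Theorem \ref{t1} into the monomial basis $\{X^k\}$ and to compare the result with the classical expansion \eqref{eq14} of $B_n(X)$. Concretely, starting from
$$
B_n(X) = B_n + \sum_{i = 1}^{n} \frac{n}{i} S(n - 1 , i - 1) X\pinf{i} ,
$$
I would substitute for each falling factorial its expansion $X\pinf{i} = \sum_{j = 0}^{i} s(i , j) X^j$ given by \eqref{eq1}, and then interchange the two finite summations so as to read off the coefficient of $X^k$ for each $k$.

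For $k \geq 1$, since $s(i , k) = 0$ whenever $i < k$, the coefficient of $X^k$ on the right-hand side is exactly $n \sum_{k \leq i \leq n} \frac{S(n - 1 , i - 1) s(i , k)}{i}$: the constant term $B_n$ contributes nothing to positive powers, and there is no interference at the bottom of the range because $s(i , 0) = 0$ for $i \geq 1$. On the other hand, \eqref{eq14} tells us that the coefficient of $X^k$ in $B_n(X)$ equals $\binom{n}{k} B_{n - k}$. Since $\{1 , X , X^2 , \dots\}$ is a basis of $\C[X]$, these two expressions for the same coefficient must agree; dividing by $n$ then gives the stated formula (and the hypothesis $n \geq k$ just ensures the index set $k \leq i \leq n$ is nonempty, the case $k > n$ being the trivial $0 = 0$).

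There is no genuine obstacle here: the identity is a direct ``coefficient-extraction'' consequence of combining Theorem \ref{t1} with \eqref{eq14} and \eqref{eq1}. The only point demanding a little care is the index bookkeeping after swapping the two sums — namely justifying that the lower summation bound may be taken to be $k$ rather than $1$ (because $s(i , k)$ vanishes for $i < k$) and checking that the $k = 0$ term is consistent but legitimately omitted from the statement. As a sanity check, for $n = k$ both sides reduce to $\tfrac{1}{k}$, which matches.
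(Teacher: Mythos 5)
Your proposal is correct and is essentially identical to the paper's own proof: both expand the identity of Theorem \ref{t1} via \eqref{eq1}, interchange the two finite sums, and identify coefficients of $X^k$ with those in \eqref{eq14}. Your extra remarks on the index bookkeeping (vanishing of $s(i,k)$ for $i<k$ and of $s(i,0)$ for $i\geq 1$) and the sanity check at $n=k$ are accurate.
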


\begin{proof}
Let $n \in \N$ be fixed. According to Theorem \ref{t1} and Formula \eqref{eq1}, we have that
\begin{align*}
B_n(X) & = B_n + \sum_{i = 1}^{n} \frac{n}{i} S(n - 1 , i - 1) X\pinf{i} \\
& = B_n + \sum_{i = 1}^{n} \frac{n}{i} S(n - 1 , i - 1) \sum_{k = 1}^{i} s(i , k) X^k \\
& = B_n + n \sum_{\begin{subarray}{c}
1 \leq i \leq n \\
1 \leq k \leq i
\end{subarray}} \frac{S(n - 1 , i - 1) s(i , k)}{i} X^k \\
& = B_n + n \sum_{\begin{subarray}{c}
1 \leq k \leq n \\
k \leq i \leq n
\end{subarray}} \frac{S(n - 1 , i - 1) s(i , k)}{i} X^k \\
& = B_n + n \sum_{k = 1}^{n} \left(\sum_{k \leq i \leq n} \frac{S(n - 1 , i - 1) s(i , k)}{i}\right) X^k . 
\end{align*}
By identifying this with \eqref{eq14}, we derive that for all $k \in \N$, with $k \leq n$, we have
$$
n  \sum_{k \leq i \leq n} \frac{S(n - 1 , i - 1) s(i , k)}{i} = \binom{n}{k} B_{n - k} ,
$$
confirming the formula of the corollary.
\end{proof}

\begin{rmk}
Since $s(n , k) = S(n , k) = 0$ for all $n , k \in \N_0$ with $n < k$, the summation condition ``$k \leq i \leq n$'' appearing in the formula of Corollary \ref{c2} can be replaced by the simpler one ``$i \in \N$''. 
\end{rmk}

From Corollary \ref{c2}, we immediately derive the following well-known result, which expresses the Bernoulli numbers in terms of Stirling numbers of the second kind. This result, in particular, served as the starting point for Carlitz \cite{car} in providing an interesting alternative proof of the famous Von Staudt-Clausen theorem.

\begin{coll}\label{c3}
For all $n \in \N_0$, we have
$$
B_n = \sum_{i = 0}^{n} (-1)^i \frac{i!}{i + 1} S(n , i) .
$$
\end{coll}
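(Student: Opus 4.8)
The plan is to specialize Corollary \ref{c2} to the case $k = 1$. For a fixed $n \in \N$, taking $k = 1$ there (and recalling $\binom{n}{1} = n$) gives
$$
\sum_{1 \leq i \leq n} \frac{S(n - 1 , i - 1)\, s(i , 1)}{i} = \frac{1}{n}\binom{n}{1} B_{n - 1} = B_{n - 1} .
$$
So the formula of Corollary \ref{c3} should drop out once $s(i,1)$ is made explicit and the summation index is shifted.

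Next I would identify $s(i,1)$. Setting $X = 0$ in \eqref{eq18} yields $1 \cdot 2 \cdots (i - 1) = \vabs{s(i , 1)}$, i.e.\ $\vabs{s(i,1)} = (i-1)!$; since the sign of $s(i,k)$ is $(-1)^{i-k}$, this gives $s(i,1) = (-1)^{i-1}(i-1)!$ for every $i \in \N$ (this is in any case a classical value of the Stirling numbers of the first kind). Substituting into the displayed identity produces
$$
B_{n - 1} = \sum_{i = 1}^{n} (-1)^{i-1} \frac{(i-1)!}{i}\, S(n - 1 , i - 1) ,
$$
and the change of index $j = i - 1$ turns this into $B_{n-1} = \sum_{j=0}^{n-1} (-1)^j \frac{j!}{j+1} S(n-1,j)$. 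Relabelling $n - 1$ as $n$ gives the claimed formula for all $n \in \N$, and the case $n = 0$ is immediate since both sides equal $1$.

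I do not expect any real obstacle here: the argument is a one-line specialization of Corollary \ref{c2} together with the elementary evaluation of $s(i,1)$ and a harmless reindexing. The only point requiring a word of justification is the identity $s(i,1) = (-1)^{i-1}(i-1)!$, which follows at once from \eqref{eq18}.
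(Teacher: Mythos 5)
Your proposal is correct and follows essentially the same route as the paper: the paper's proof is precisely the specialization of Corollary \ref{c2} to the pair $(n+1,1)$ together with the classical value $s(m,1)=(-1)^{m-1}(m-1)!$, which is what you do (your derivation of that value from \eqref{eq18} at $X=0$ is a fine justification). No issues.
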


\begin{proof}
It suffices to apply Corollary \ref{c2} for $(n , k)$ replaced by $(n + 1 , 1)$ ($n \in \N_0$) and note that $s(m , 1) = (-1)^{m - 1} (m - 1)!$ ($\forall m \in \N$). 
\end{proof}

Further, by relying on the recurrence relation \eqref{eq4} and the orthogonality relation \eqref{eq6}, we derive from Corollary \ref{c2} the following corollary:

\begin{coll}\label{c4}
For all $n , k \in \N$, with $n \geq k$, we have
$$
\sum_{k \leq i \leq n} \frac{S(n , i) s(i , k)}{i} = \frac{1}{n} \binom{n}{k} B_{n - k} + \delta_{n - 1 , k} .
$$
\end{coll}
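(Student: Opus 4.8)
The plan is to deduce Corollary \ref{c4} directly from Corollary \ref{c2} by means of the recurrence \eqref{eq4}. The starting observation is that \eqref{eq4}, namely $S(n,i) = S(n-1,i-1) + i\,S(n-1,i)$, yields upon dividing by $i$ the identity $\frac{S(n,i)}{i} = \frac{S(n-1,i-1)}{i} + S(n-1,i)$, valid for all $i \in \N$. Substituting this into the left-hand side of the formula to be proved splits it as
\[
\sum_{k \leq i \leq n} \frac{S(n,i)\,s(i,k)}{i} = \sum_{k \leq i \leq n} \frac{S(n-1,i-1)\,s(i,k)}{i} + \sum_{k \leq i \leq n} S(n-1,i)\,s(i,k) .
\]

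Next I would identify the two sums on the right-hand side. The first is precisely the left-hand side of Corollary \ref{c2}, hence equals $\frac{1}{n}\binom{n}{k} B_{n-k}$. For the second sum, I would use that $S(n-1,i) = 0$ for every $i > n-1$, so that the term $i = n$ drops out and the sum becomes $\sum_{k \leq i \leq n-1} S(n-1,i)\,s(i,k)$; by the orthogonality relation \eqref{eq6}, applied with $(n-1,k)$ in the roles of $(n,m)$, this last sum equals $\delta_{n-1,k}$. Adding the two contributions produces exactly the asserted identity.

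The only point requiring a word of care is the extreme case $k = n$, in which \eqref{eq6} is not literally applicable (it requires $k \leq n-1$); but then the second sum reduces to the single term $i = n$ with factor $S(n-1,n) = 0$, so it equals $0 = \delta_{n-1,n}$, and the formula still holds. Apart from this bit of index bookkeeping, the argument is entirely routine, so I do not anticipate any substantial obstacle — the real content was already established in Corollary \ref{c2}.
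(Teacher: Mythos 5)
Your proof is correct and follows essentially the same route as the paper: both use the recurrence \eqref{eq4} to split the sum into the left-hand side of Corollary \ref{c2} plus $\sum_{k \leq i \leq n} S(n-1,i)\,s(i,k)$, and then invoke the orthogonality relation \eqref{eq6}. Your extra remark on the boundary case $k = n$ is a small point of care that the paper leaves implicit.
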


\begin{proof}
Let $n , k \in \N$ be fixed, with $n \geq k$. By the recurrence relation \eqref{eq4}, we have for all $i \in \{k , k + 1 , \dots , n\}$: $S(n , i) = S(n - 1 , i - 1) + i S(n - 1 , i)$, that is
$$
S(n - 1 , i - 1) = S(n , i) - i S(n - 1 , i) .
$$
By inserting this in the formula of Corollary \ref{c2} (for all $i \in \N$, with $k \leq i \leq n$), we get after simplifying and rearranging
\begin{align*}
\sum_{k \leq i \leq n} \frac{S(n , i) s(i , k)}{i} & = \frac{1}{n} \binom{n}{k} B_{n - k} + \sum_{k \leq i \leq n} S(n - 1 , i) s(i , k) \\
& = \frac{1}{n} \binom{n}{k} B_{n - k} + \delta_{n - 1 , k}
\end{align*}
(according to the orthogonality relation \eqref{eq6}). The corollary is proved.
\end{proof}

In what follows, we will establish analogs of the formulas in Corollaries \ref{c2} and \ref{c4}, where the kinds of Stirling numbers are permuted. In other words, we will find closed forms for the two sums $\sum_{k \leq i \leq n} \frac{s(n - 1 , i - 1) S(i , k)}{i}$ and $\sum_{k \leq i \leq n} \frac{s(n , i) S(i , k)}{i}$ ($n , k \in \N$, $n \geq k$). Interestingly, the resulting closed forms depend on Bernoulli numbers of the second kind (see Theorem \ref{t4} and Corollary \ref{c5} below).

\begin{thm}\label{t4}
For all $n , k \in \N$, with $n \geq k$, we have
$$
\sum_{k \leq i \leq n} \frac{s(n - 1 , i - 1) S(i , k)}{i} = \frac{1}{n} \binom{n}{k} B_{n - k}^* .
$$
\end{thm}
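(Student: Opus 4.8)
The strategy I would follow is the falling-factorial analogue of the argument used for Corollary \ref{c2}: fix $n \in \N$ and compute, in two different ways, the expansion of a single polynomial in the falling-factorial basis $\big\{X\pinf k\big\}_{k \geq 0}$ of $\C[X]$. The polynomial to use is $P(X) := \int_0^X t\pinf{n-1}\, dt$. One computation produces the sum on the left-hand side of the asserted identity as the coefficient of $X\pinf k$, the other produces $\tfrac1n \binom nk B_{n-k}^*$, and equating the two finishes the proof (the $X\pinf k$ being a basis of $\C[X]$).

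For the first expansion I would write $t\pinf{n-1} = \sum_{j} s(n-1,j)\, t^j$ by \eqref{eq1}, integrate term by term to get $P(X) = \sum_{i=1}^{n} \frac{s(n-1,i-1)}{i}\, X^i$, then substitute $X^i = \sum_{k} S(i,k)\, X\pinf k$ from \eqref{eq2} and interchange the summations. Since $S(i,0) = 0$ for every $i \geq 1$, the constant term drops and one obtains
$$
P(X) = \sum_{k=1}^{n} \left( \sum_{k \leq i \leq n} \frac{s(n-1,i-1)\, S(i,k)}{i} \right) X\pinf k .
$$

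For the second expansion I would use the operator identities from the introduction. Since $D = \log(I + \Delta)$ and, by \eqref{eq8}, $\frac{\Delta}{\log(I+\Delta)} = \sum_{m \geq 0} \frac{B_m^*}{m!}\, \Delta^m$ as operators on $\C[X]$, the polynomial $\big(\sum_{m \geq 0} \frac{B_m^*}{m!} \Delta^m\big)\big(\frac{X\pinf n}{n}\big)$ is an antiderivative of $\Delta \frac{X\pinf n}{n} = X\pinf{n-1}$ (using \eqref{eq9}). Iterating \eqref{eq9} gives $\Delta^m X\pinf n = n\pinf m\, X\pinf{n-m}$, so this antiderivative equals $\frac1n \sum_{m=0}^{n} \binom nm B_m^*\, X\pinf{n-m}$; subtracting its value at $X = 0$, which is the $m = n$ term $\tfrac1n B_n^*$, leaves the antiderivative of $X\pinf{n-1}$ that vanishes at $0$, namely $P(X)$:
$$
P(X) = \frac1n \sum_{m=0}^{n-1} \binom nm B_m^*\, X\pinf{n-m} = \sum_{k=1}^{n} \frac1n \binom nk B_{n-k}^*\, X\pinf k
$$
(reindexing $k = n-m$). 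One can avoid the operator-inversion bookkeeping altogether by instead computing $\Delta P(X) = \int_X^{X+1} t\pinf{n-1}\, dt = \int_0^1 (X+s)\pinf{n-1}\, ds$, expanding via the Vandermonde--Chu identity $(X+s)\pinf{n-1} = \sum_j \binom{n-1}{j} X\pinf{n-1-j}\, s\pinf j$ and \eqref{eq13}, doing the same for the candidate polynomial via \eqref{eq9}, and using that two polynomials with equal $\Delta$-images and equal value at $0$ coincide.

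Equating, for each $k$ with $1 \leq k \leq n$, the coefficients of $X\pinf k$ in the two expansions of $P(X)$ yields exactly the identity of the theorem. The only step that is not routine term-by-term manipulation of \eqref{eq1}, \eqref{eq2}, \eqref{eq8}, and \eqref{eq9} is the second expansion — recognizing the closed form $\sum_k \tfrac1n \binom nk B_{n-k}^* X\pinf k$ for the falling-factorial expansion of $\int_0^X t\pinf{n-1}\, dt$ — and there the one point requiring care is the tracking of the integration constant.
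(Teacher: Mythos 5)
Your proposal is correct and follows essentially the same route as the paper: both compute the falling-factorial expansion of an antiderivative of $x\pinf{n-1}$ in two ways, once via \eqref{eq1} and \eqref{eq2}, and once via the operator identity obtained by substituting $\Delta$ into the generating function \eqref{eq8} together with $D = \log(I+\Delta)$, then identify coefficients. The only (harmless) difference is that you fix the integration constant by working with the specific antiderivative vanishing at $0$, whereas the paper works up to an additive constant and only compares coefficients of $x\pinf{k}$ for $k \geq 1$.
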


\begin{proof}
Let $n \in \N$ be fixed. According to Formula \eqref{eq1}, we have
$$
x\pinf{n - 1} = \sum_{i = 0}^{n - 1} s(n - 1 , i) x^i .
$$
By integrating with respect to $x$, we get (up to an additive constant)
\begin{align*}
\int x\pinf{n - 1} \, d x & = \sum_{i = 0}^{n - 1} \frac{s(n - 1 , i)}{i + 1} x^{i + 1} \\
& = \sum_{i = 0}^{n - 1} \frac{s(n - 1 , i)}{i + 1} \sum_{k = 1}^{i + 1} S(i + 1 , k) x\pinf{k} ~~~~~~~~~~ (\text{by Formula \eqref{eq2}}) \\
& = \sum_{k = 1}^{n} \left(\sum_{k - 1 \leq i \leq n - 1} \frac{s(n - 1 , i) S(i + 1 , k)}{i + 1}\right) x\pinf{k} ,
\end{align*}
that is (up to an additive constant)
\begin{equation}\label{eq15}
\int x\pinf{n - 1} \, d x = \sum_{k = 1}^{n} \left(\sum_{k \leq i \leq n} \frac{s(n - 1 , i - 1) S(i , k)}{i}\right) x\pinf{k} .
\end{equation}
On the other hand, we have (according to Formula \eqref{eq8})
$$
\frac{1}{\log(1 + t)} = \frac{1}{t} \sum_{k = 0}^{+ \infty} B_k^* \frac{t^k}{k!} = \frac{1}{t} + \sum_{k = 1}^{+ \infty} B_k^* \frac{t^{k - 1}}{k!} = \frac{1}{t} + \sum_{k = 0}^{+ \infty} B_{k + 1}^* \frac{t^k}{(k + 1)!} .
$$
By applying this to the operator $\Delta$ and use the fact that $\log(I + \Delta) = D$, we get the identity of operators:
$$
D^{-1} = \Delta^{-1} + \sum_{k = 0}^{+ \infty} B_{k + 1}^* \frac{\Delta^k}{(k + 1)!} .
$$
Then, by applying this last identity of operators to the polynomial $x\pinf{n - 1}$ (which is of degree $(n - 1)$), we get (according to \eqref{eq19} and \eqref{eq9})
\begin{align*}
\int x\pinf{n - 1} \, d x & = \frac{x\pinf{n}}{n} + \sum_{k = 0}^{n - 1} B_{k + 1}^* (n - 1)\pinf{k} \frac{x\pinf{n - 1 - k}}{(k + 1)!} \\
& = \frac{x\pinf{n}}{n} + \sum_{k = 0}^{n - 1} \frac{1}{n} \binom{n}{k + 1} B_{k + 1}^* x\pinf{n - 1 - k}  \\
& = \frac{x\pinf{n}}{n} + \sum_{k = 1}^{n} \frac{1}{n} \binom{n}{k} B_k^* x\pinf{n - k} \\
& = \sum_{k = 0}^{n} \frac{1}{n} \binom{n}{k} B_k^* x\pinf{n - k} ,
\end{align*}
that is (up to an additive constant)
\begin{equation}\label{eq16}
\int x\pinf{n - 1} \, d x = \sum_{k = 0}^{n} \frac{1}{n} \binom{n}{k} B_{n - k}^* x\pinf{k} .
\end{equation}
Finally, by identifying the coefficients of $x\pinf{k}$ ($1 \leq k \leq n$) in the right-hand sides of \eqref{eq15} and \eqref{eq16}, we derive that for all $k \in \N$, with $k \leq n$, we have
$$
\sum_{k \leq i \leq n} \frac{s(n - 1 , i - 1) S(i , k)}{i} = \frac{1}{n} \binom{n}{k} B_{n - k}^* ,
$$
as required.
\end{proof}

\begin{coll}\label{c5}
For all $n , k \in \N$, with $n \geq k$, we have
$$
\sum_{k \leq i \leq n} \frac{s(n , i) S(i , k)}{i} = (-1)^{n - k} \frac{(n - 1)!}{k!} \sum_{\ell = 0}^{n - k} \frac{(-1)^{\ell}}{\ell!} B_{\ell}^* .
$$
\end{coll}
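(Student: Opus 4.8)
The plan is to mimic the passage from Corollary~\ref{c2} to Corollary~\ref{c4}, but now starting from Theorem~\ref{t4} and using the recurrence \eqref{eq3} for the Stirling numbers of the first kind. Write
\[
T(n,k) := \sum_{k \le i \le n} \frac{s(n,i)\, S(i,k)}{i}
\]
for the quantity to be evaluated. From \eqref{eq3} we have $s(n-1,i-1) = s(n,i) + (n-1)\, s(n-1,i)$ for every $i$, so substituting this into the identity of Theorem~\ref{t4} splits its left-hand side and gives
\[
\sum_{k \le i \le n} \frac{s(n,i)\, S(i,k)}{i} \;+\; (n-1) \sum_{k \le i \le n} \frac{s(n-1,i)\, S(i,k)}{i} \;=\; \frac{1}{n} \binom{n}{k} B_{n-k}^{*} .
\]
The crucial point is that, unlike in the proof of Corollary~\ref{c4}, where the factor $i$ coming from \eqref{eq4} cancels the $1/i$ and leaves a bare orthogonality sum, here the factor $(n-1)$ does not interact with the summand; and since $s(n-1,n) = 0$, the second sum is literally $T(n-1,k)$. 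Hence we obtain the first-order recurrence
\[
T(n,k) \;=\; \frac{1}{n} \binom{n}{k} B_{n-k}^{*} \;-\; (n-1)\, T(n-1,k) \qquad (n > k \ge 1),
\]
together with the trivial initial value $T(k,k) = \dfrac{s(k,k)\, S(k,k)}{k} = \dfrac{1}{k}$.

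It then remains to solve this recurrence, which I would do by induction on $n$ for fixed $k$. The base case $n = k$ matches the claimed formula because $B_0^{*} = 1$. For the inductive step one inserts the induction hypothesis for $T(n-1,k)$, uses $\frac{1}{n}\binom{n}{k} = \frac{(n-1)!}{k!\,(n-k)!}$ and $(n-1)\cdot(n-2)! = (n-1)!$, factors out $(-1)^{n-k}\frac{(n-1)!}{k!}$, and checks that the terms $\ell = 0, \dots, n-k-1$ produced by $-(n-1)T(n-1,k)$ together with the term $\ell = n-k$ coming from $\frac{1}{n}\binom{n}{k} B_{n-k}^{*}$ assemble into exactly $(-1)^{n-k}\frac{(n-1)!}{k!}\sum_{\ell=0}^{n-k}\frac{(-1)^{\ell}}{\ell!} B_{\ell}^{*}$. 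Alternatively, one can simply unroll the recurrence down to $T(k,k)$: after $m$ steps the coefficient of $B_{n-m-k}^{*}$ equals $(-1)^m (n-1)(n-2)\cdots(n-m)\cdot\frac{1}{n-m}\binom{n-m}{k} = (-1)^m \frac{(n-1)!}{k!\,(n-m-k)!}$, while the final term is $(-1)^{n-k}(n-1)(n-2)\cdots k \cdot T(k,k) = (-1)^{n-k}\frac{(n-1)!}{k!}$; reindexing by $\ell = n-k-m$ turns the result into the asserted identity.

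The only non-routine step is the identification of the auxiliary sum $(n-1)\sum_{k \le i \le n}\frac{s(n-1,i)\, S(i,k)}{i}$ with $(n-1)\, T(n-1,k)$, which rests on $s(n-1,n)=0$. Once that is in place, both the derivation of the recurrence and its resolution are elementary bookkeeping, with no genuine obstacle.
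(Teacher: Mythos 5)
Your proposal is correct and follows essentially the same route as the paper: both substitute the recurrence \eqref{eq3} into Theorem \ref{t4} to obtain the first-order relation $T(n,k) = \frac{1}{n}\binom{n}{k}B_{n-k}^* - (n-1)\,T(n-1,k)$, and then resolve it down to $T(k,k)$ (the paper makes the unrolling explicit by multiplying by $\frac{(-1)^m}{(m-1)!}$ and telescoping, which is the same computation as your induction/unrolling). No substantive difference.
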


\begin{proof}
Let $n , k \in \N$ be fixed such that $n \geq k$. Since the formula of the corollary is clearly true for $n = k$, we may assume that $n \geq k + 1$. According to Theorem \ref{t4}, we have for all integer $m \geq k + 1$:
$$
\sum_{k \leq i \leq m} \frac{s(m - 1 , i - 1) S(i , k)}{i} = \frac{1}{m} \binom{m}{k} B_{m - k}^* .
$$
But (according to the recurrence relation \eqref{eq3}), we have for all integers $i , m$ such that $k \leq i \leq m$:
$$
s(m - 1 , i - 1) = s(m , i) + (m - 1) s(m - 1 , i) .
$$
By inserting this into the previous formula, we get (for all integer $m \geq k + 1$)
$$
\sum_{k \leq i \leq m} \frac{s(m , i) S(i , k)}{i} + (m - 1) \sum_{k \leq i \leq m - 1} \frac{s(m - 1 , i) S(i , k)}{i} = \frac{1}{m} \binom{m}{k} B_{m - k}^*
$$
(since $s(m - 1 , m) = 0$). Then, by multiplying by $\frac{(-1)^m}{(m - 1)!}$, we get
$$
\frac{(-1)^m}{(m - 1)!} \sum_{k \leq i \leq m} \frac{s(m , i) S(i , k)}{i} - \frac{(-1)^{m - 1}}{(m - 2)!} \sum_{k \leq i \leq m - 1} \frac{s(m - 1 , i) S(i , k)}{i} = \frac{(-1)^m}{k! (m - k)!} B_{m - k}^*
$$
(which is valid for all integer $m \geq k + 1$). By summing both sides of this last identity from $m = k + 1$ to $m = n$ and remarking that the sum on the left-hand side is telescopic, we derive that:
$$
\frac{(-1)^n}{(n - 1)!} \sum_{k \leq i \leq n} \frac{s(n , i) S(i , k)}{i} - \frac{(-1)^k}{k!} = \frac{1}{k!} \sum_{m = k + 1}^{n} \frac{(-1)^m}{(m - k)!} B_{m - k}^* ,
$$
hence
\begin{align*}
\frac{(-1)^n}{(n - 1)!} \sum_{k \leq i \leq n} \frac{s(n , i) S(i , k)}{i} & = \frac{1}{k!} \sum_{m = k}^{n} \frac{(-1)^m}{(m - k)!} B_{m - k}^* \\
& = \frac{1}{k!} \sum_{\ell = 0}^{n - k} \frac{(-1)^{\ell + k}}{\ell!} B_{\ell}^* ,
\end{align*}
implying the required formula of the corollary.
\end{proof}

\begin{rmk}
In the context of Corollary \ref{c5}, the quantity $(-1)^{n - k} \sum_{\ell = 0}^{n - k} \frac{(-1)^{\ell}}{\ell!} B_{\ell}^*$ represents the coefficient of $t^{n - k}$ in the power series $\frac{t}{(1 + t) \log(1 + t)}$. In fact, we can reprove Corollary \ref{c5} by applying the analytic function $t \mapsto \frac{t}{(1 + t) \log(1 + t)}$ to the operator $\Delta$, in a manner similar to the proof of Theorem \ref{t4}.
\end{rmk}

In another direction, we immediately derive from Theorem \ref{t4} the following corollary providing an expression of the Bernoulli numbers of the second kind in terms of Stirling numbers of the first kind.

\begin{coll}\label{c6}
For all $n \in \N_0$, we have
$$
B_n^* = \sum_{i = 0}^{n} \frac{s(n , i)}{i + 1} .
$$
\end{coll}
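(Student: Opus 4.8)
The plan is to obtain the formula as a direct specialization of Theorem \ref{t4}. Recall first that the Stirling numbers of the second kind satisfy $S(i,1) = 1$ for every $i \in \N$; this is immediate from the recurrence \eqref{eq4} together with $S(1,1) = 1$, or directly from the defining identity \eqref{eq2}. Now I would fix $n \in \N_0$ and apply the formula of Theorem \ref{t4} with $k = 1$ and with $n$ replaced by $n + 1$, which is legitimate since $n + 1 \geq 1 = k$. The right-hand side then becomes $\frac{1}{n+1}\binom{n+1}{1} B_{(n+1)-1}^{*} = B_n^{*}$, while on the left-hand side every factor $S(i,1)$ equals $1$, so the sum reduces to $\sum_{1 \leq i \leq n+1} \frac{s(n,i-1)}{i}$.

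The only remaining step is the change of summation index $j = i - 1$, which turns $\sum_{i=1}^{n+1} \frac{s(n,i-1)}{i}$ into $\sum_{j=0}^{n} \frac{s(n,j)}{j+1}$, yielding exactly the asserted identity $B_n^{*} = \sum_{i=0}^{n} \frac{s(n,i)}{i+1}$. I would also point out that the case $n = 0$ is covered by this same argument (both sides equal $1$, since $B_0^{*} = 1$ and $s(0,0) = 1$), so no separate verification is needed. There is essentially no obstacle here: the work is purely a matter of choosing the right specialization and doing the index bookkeeping.

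For completeness I would remark that Corollary \ref{c6} also admits a one-line self-contained proof independent of Theorem \ref{t4}. Integrating the polynomial identity \eqref{eq1} term by term over $[0,1]$ gives $\int_{0}^{1} x\pinf{n}\,dx = \sum_{i=0}^{n} s(n,i) \int_{0}^{1} x^i\,dx = \sum_{i=0}^{n} \frac{s(n,i)}{i+1}$, and by Formula \eqref{eq13} the left-hand side is precisely $B_n^{*}$. This alternative is arguably the most transparent route, but the derivation from Theorem \ref{t4} is the one that fits the narrative of the section.
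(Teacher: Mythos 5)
Your proposal is correct and follows exactly the paper's route: specialize Theorem \ref{t4} to the pair $(n+1, 1)$, use $S(m,1) = 1$, and reindex. Even your closing remark about the more direct proof by integrating \eqref{eq1} over $[0,1]$ and invoking \eqref{eq13} matches the observation the paper makes in the remark immediately following this corollary.
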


\begin{proof}
Giving $n \in \N_0$, it suffices to apply Theorem \ref{t4} for the pair $(n + 1 , 1)$ (instead of $(n , k)$) and note that $S(m , 1) = 1$ (for all $m \in \N$). 
\end{proof}

\begin{rmk}
The formula in Corollary \ref{c6} can be considered an analog of that in Corollary \ref{c3}. Moreover, it can be obtained more simply and independently of Theorem \ref{t4}, by integrating Formula \eqref{eq1} from $X = 0$ to $X = 1$ and using Formula \eqref{eq13}. Later, we will generalize the formula in Corollary \ref{c6} by establishing a closed form for the general sum $\sum_{i = 0}^{n} \frac{s(n , i)}{i + r}$ ($r \in \N$, $n \in \N_0$); see Theorem \ref{t6}.
\end{rmk}

We now proceed to generalize the formula in Corollary \ref{c3} by finding a closed form for the general sum $\sum_{k = 0}^{n} (-1)^k \frac{k!}{k + r} S(n , k)$ ($r \in \N$, $n \in \N_0$).

\begin{thm}\label{t5}
Let $r \in \N$. Then for all $n \in \N_0$, we have
$$
\sum_{k = 0}^{n} (-1)^k \frac{k!}{k + r} S(n , k) = \frac{1}{(r - 1)!} \sum_{k = 0}^{r - 1} \vabs{s(r , k + 1)} B_{n + k} .
$$
\end{thm}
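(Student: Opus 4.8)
The plan is to compare the exponential generating functions in a formal variable $t$ of the two sides. Write $L_n := \sum_{k=0}^{n}(-1)^k\frac{k!}{k+r}S(n,k)$ and $R_n := \frac{1}{(r-1)!}\sum_{k=0}^{r-1}\vabs{s(r,k+1)}B_{n+k}$. For the left-hand side I would use the classical generating function $\sum_{n\ge k}S(n,k)\frac{t^n}{n!} = \frac{(e^t-1)^k}{k!}$ (legitimate termwise, since the coefficient of a fixed power of $t$ involves only finitely many $k$), obtaining
$$
\sum_{n\ge 0}L_n\,\frac{t^n}{n!} \;=\; \sum_{k\ge 0}(-1)^k\frac{k!}{k+r}\cdot\frac{(e^t-1)^k}{k!} \;=\; \sum_{k\ge 0}\frac{(1-e^t)^k}{k+r}.
$$
For the right-hand side, since $\frac{t}{e^t-1} = \sum_{m\ge 0}B_m\frac{t^m}{m!}$, differentiating $k$ times gives $\left(\frac{d}{dt}\right)^k\frac{t}{e^t-1} = \sum_{n\ge 0}B_{n+k}\frac{t^n}{n!}$; combining this with \eqref{eq18} (with $n$ replaced by $r$) yields
$$
\sum_{n\ge 0}R_n\,\frac{t^n}{n!} \;=\; \frac{1}{(r-1)!}\sum_{k=0}^{r-1}\vabs{s(r,k+1)}\left(\frac{d}{dt}\right)^{\!k}\!\frac{t}{e^t-1} \;=\; \frac{1}{(r-1)!}\left(\frac{d}{dt}+1\right)\!\cdots\!\left(\frac{d}{dt}+r-1\right)\!\frac{t}{e^t-1},
$$
the empty product (case $r=1$) being the identity operator.

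The crux is to bring the right-hand generating function to a comparable form. I would make the formal substitution $z := e^t-1$ (equivalently $t=\log(1+z)$), under which $\frac{t}{e^t-1}$ becomes $\frac{\log(1+z)}{z}$ and, by the chain rule, the operator $\frac{d}{dt}$ corresponds to $\vartheta := (1+z)\frac{d}{dz}$. A one-line check gives the conjugation identity $\vartheta + a = (1+z)^{-a}\circ\vartheta\circ(1+z)^{a}$ for every scalar $a$, so the product telescopes:
$$
\left(\vartheta+1\right)\!\cdots\!\left(\vartheta+r-1\right) \;=\; \bigl((1+z)^{-1}\vartheta\bigr)^{r-1}\circ(1+z)^{r-1} \;=\; \left(\frac{d}{dz}\right)^{\!r-1}\!\circ\,(1+z)^{r-1},
$$
since $(1+z)^{-1}\vartheta = \frac{d}{dz}$. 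Hence the theorem is equivalent to the formal power-series identity in $z$
$$
\frac{1}{(r-1)!}\,\frac{d^{r-1}}{dz^{r-1}}\!\left[(1+z)^{r-1}\,\frac{\log(1+z)}{z}\right] \;=\; \sum_{k\ge 0}\frac{(-1)^k}{k+r}\,z^k,
$$
the right side being $\sum_{k\ge0}\frac{(1-e^t)^k}{k+r}$ rewritten via $1-e^t=-z$.

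To finish I would compare coefficients of $z^n$. Expanding $\frac{\log(1+z)}{z}=\sum_{\ell\ge0}\frac{(-1)^\ell}{\ell+1}z^\ell$ and $(1+z)^{r-1}=\sum_{i=0}^{r-1}\binom{r-1}{i}z^i$, and noting that $\tfrac1{(r-1)!}\tfrac{d^{r-1}}{dz^{r-1}}$ multiplies the coefficient of $z^{n+r-1}$ by $\binom{n+r-1}{r-1}$, the needed equality becomes
$$
\binom{n+r-1}{r-1}\sum_{i=0}^{r-1}\binom{r-1}{i}\frac{(-1)^{\,n+r-1-i}}{\,n+r-i\,} \;=\; \frac{(-1)^n}{n+r}.
$$
After reindexing $i\mapsto r-1-i$ the inner sum equals $(-1)^n\sum_{i=0}^{r-1}\binom{r-1}{i}\frac{(-1)^i}{n+1+i}$, and the classical partial-fraction identity $\sum_{i=0}^{m}\binom{m}{i}\frac{(-1)^i}{x+i}=\frac{m!}{x(x+1)\cdots(x+m)}$ (with $m=r-1$, $x=n+1$) evaluates it to $(-1)^n\frac{(r-1)!\,n!}{(n+r)!}$; multiplying by $\binom{n+r-1}{r-1}=\frac{(n+r-1)!}{(r-1)!\,n!}$ gives exactly $\frac{(-1)^n}{n+r}$. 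This proves the reduced identity, hence the two generating functions coincide, and reading off the coefficient of $\frac{t^n}{n!}$ gives $L_n=R_n$. The one delicate point is the middle step—getting a tractable closed form for $\left(\frac{d}{dt}+1\right)\!\cdots\!\left(\frac{d}{dt}+r-1\right)\frac{t}{e^t-1}$—and the substitution $z=e^t-1$ with the conjugation $\vartheta+a=(1+z)^{-a}\vartheta(1+z)^a$ is precisely what makes it manageable; the rest is a routine coefficient computation.
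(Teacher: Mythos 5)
Your argument is correct, and it takes a genuinely different route from the paper. The paper stays entirely at the level of polynomial identities: it multiplies \eqref{eq2} by \eqref{eq18} to get $\sum_{k} S(n,k)\,(X+r-1)\pinf{k+r-1}=\sum_{k}\vabs{s(r,k+1)}X^{n+k}$, applies $\Delta^{-1}$ (via \eqref{eq19} and \eqref{eq20}, which is what brings in the Bernoulli polynomials), divides by $X$, and extracts the constant term as a derivative at $0$ using $B_m'(0)=mB_{m-1}$. You instead work with exponential generating functions, and your key move --- the substitution $z=e^t-1$ turning $\frac{d}{dt}$ into $\vartheta=(1+z)\frac{d}{dz}$, followed by the conjugation $\vartheta+a=(1+z)^{-a}\vartheta(1+z)^a$ that collapses $(\vartheta+1)\cdots(\vartheta+r-1)$ into $\bigl(\frac{d}{dz}\bigr)^{r-1}(1+z)^{r-1}$ --- is the analytic shadow of the same $D\leftrightarrow\Delta$ duality the paper exploits combinatorially, but it is deployed quite differently. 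I checked the details: the interchange of sums for $\sum_n L_n t^n/n!$ is legitimate since $(e^t-1)^k$ has order $k$; the conjugation identity and the telescoping of the product are correct; and the final coefficient identity does reduce, after the reindexing $i\mapsto r-1-i$, to the standard partial-fraction formula $\sum_{i=0}^{m}\binom{m}{i}\frac{(-1)^i}{x+i}=\frac{m!}{x(x+1)\cdots(x+m)}$, whose evaluation at $m=r-1$, $x=n+1$ yields exactly $\frac{(-1)^n}{n+r}$ after multiplying by $\binom{n+r-1}{r-1}$. What your approach buys is a self-contained verification reducible to one classical binomial identity (and it makes transparent why the combination $(X+1)\cdots(X+r-1)$ of \eqref{eq18} is the ``right'' one, via the operator factorization); what the paper's approach buys is brevity and uniformity with its other results, since it reuses the operator calculus $D=\log(I+\Delta)$, $\Delta^{-1}$ already set up in Section \ref{sec1}. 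If you write this up, it would be worth stating explicitly that you cite the partial-fraction identity as known, or include its one-line proof via $\int_0^1 u^{x-1}(1-u)^m\,du$.
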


\begin{proof}
Let $r \in \N$ and $n \in \N_0$ be fixed. By multiplying side by side the two polynomial identities
\begin{align*}
\sum_{k = 0}^{n} S(n , k) X\pinf{k} & = X^n \\[-7mm]
\intertext{and} \\[-14mm]
(X + 1) (X + 2) \cdots (X + r - 1) & = \sum_{k = 0}^{r - 1} \vabs{s(r , k + 1)} X^k
\end{align*}
(which are nothing else \eqref{eq2} and \eqref{eq18}), we get
$$
\sum_{k = 0}^{n} S(n , k) (X + r - 1)\pinf{k + r - 1} = \sum_{k = 0}^{r - 1} \vabs{s(r , k + 1)} X^{n + k} .
$$
Then, by applying the operator $\Delta^{-1}$ to the two sides of this last polynomial identity and taking into account \eqref{eq19} and \eqref{eq20}, we get
$$
\sum_{k = 0}^{n} \frac{S(n , k)}{k + r} (X + r - 1)\pinf{k + r} = \sum_{k = 0}^{r - 1} \vabs{s(r , k + 1)} \frac{B_{n + k + 1}(X)}{n + k + 1} + \c_{n , r}
$$
(where $\c_{n , r}$ is a constant depending only on $n$ and $r$). By dividing on $X$ and remarking that $\frac{(X + r - 1)\pinf{k + r}}{X} = (X + r - 1) (X + r - 2) \cdots (X + 1) \cdot (X - 1) (X - 2) \cdots (X - k) =$ \mbox{$(X + r - 1)\pinf{r - 1} \cdot (X - 1)\pinf{k}$} (for all $k \in \N_0$, $k \leq n$), we get
$$
\sum_{k = 0}^{n} \frac{S(n , k)}{k + r} (X + r - 1)\pinf{r - 1} (X - 1)\pinf{k} = \dfrac{\displaystyle\sum_{k = 0}^{r - 1} \vabs{s(r , k + 1)} \frac{B_{n + k + 1}(X)}{n + k + 1} + \c_{n , r}}{X} .
$$
Finally, by letting $X \rightarrow 0$ and remarking that the polynomial $\sum_{k = 0}^{r - 1} \vabs{s(r , k + 1)} \frac{B_{n + k + 1}(X)}{n + k + 1} + \c_{n , r}$ vanishes at $X = 0$ (according to the previous formula), we get
\begin{align*}
\sum_{k = 0}^{n} \frac{S(n , k)}{k + r} (r - 1)\pinf{r - 1} (-1)\pinf{k} & = \lim_{X \rightarrow 0}   \dfrac{\sum_{k = 0}^{r - 1} \vabs{s(r , k + 1)} \frac{B_{n + k + 1}(X)}{n + k + 1} + \c_{n , r}}{X} \\
& = \frac{d}{d X} \left(\sum_{k = 0}^{r - 1} \vabs{s(r , k + 1)} \frac{B_{n + k + 1}(X)}{n + k + 1} + \c_{n , r}\right)(0) \\
& = \sum_{k = 0}^{r - 1} \vabs{s(r , k + 1)} B_{n + k} ~~~~~~~~~~ (\text{according to \eqref{eq11}}) .
\end{align*}
It remains to note that $(r - 1)\pinf{r - 1} = (r - 1)!$ and $(-1)\pinf{k} = (-1)^k k!$ to conclude with the required formula of the theorem.
\end{proof}

\medskip

\begin{expls}~
\begin{itemize}
\item By taking $r = 1$ in Theorem \ref{t5}, we simply obtain the formula of Corollary \ref{c3}.
\item By taking $r = 2$ in Theorem \ref{t5} and taking into account the fact that $\vabs{s(2 , 1)} = \vabs{s(2 , 2)} = 1$, we obtain the remarkable formula:
\begin{equation}\label{eq21}
\sum_{k = 0}^{n} (-1)^k \frac{k!}{k + 2} S(n , k) = B_n + B_{n + 1}
\end{equation}
(valid for all $n \in \N_0$).
\item By taking $r = 3$ in Theorem \ref{t5} and taking into account the facts that $\vabs{s(3 , 1)} = 2$, $\vabs{s(3 , 2)} = 3$, and $\vabs{s(3 , 3)} = 1$, we obtain the formula:
\begin{equation}\label{eq22}
\sum_{k = 0}^{n} (-1)^k \frac{k!}{k + 3} S(n , k) = B_n + \frac{3}{2} B_{n + 1} + \frac{1}{2} B_{n + 2}
\end{equation}
(valid for all $n \in \N_0$). 
\end{itemize}
\end{expls}

We finally present a generalization of the formula in Corollary \ref{c6} by finding a closed form for the general sum $\sum_{k = 0}^{n} \frac{s(n , k)}{k + r}$ ($r \in \N$, $n \in \N_0$).

\begin{thm}\label{t6}
Let $r \in \N$. Then for all $n \in \N_0$, we have
$$
\sum_{k = 0}^{n} \frac{s(n , k)}{k + r} = \sum_{k = 0}^{r - 1} \lambda_{r , n , k} B_{n + k}^* ,
$$
where
$$
\lambda_{r , n , k} := \sum_{\ell = 0}^{r - 1 - k} \binom{r - 1}{\ell} S(r - 1 - \ell , k) n^{\ell}
$$
{\rm(}for all $k \in \{0 , 1 , \dots , r - 1\}${\rm)}.
\end{thm}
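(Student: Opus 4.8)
The plan is to collapse the left-hand side into a single definite integral and then expand the integrand in the basis of falling factorials recentred at $n$. Since $\frac{1}{k+r} = \int_0^1 x^{k+r-1}\,dx$, Formula \eqref{eq1} gives
$$
\sum_{k=0}^{n}\frac{s(n,k)}{k+r} \;=\; \int_0^1 x^{r-1}\sum_{k=0}^{n} s(n,k)\,x^k\,dx \;=\; \int_0^1 x^{r-1}\,x\pinf{n}\,dx .
$$
Hence, by Formula \eqref{eq13}, the theorem will follow once we establish the polynomial identity
$$
x^{r-1}\,x\pinf{n} \;=\; \sum_{k=0}^{r-1}\lambda_{r,n,k}\,x\pinf{n+k} ,
$$
because integrating this identity term by term from $0$ to $1$ produces exactly $\sum_{k=0}^{r-1}\lambda_{r,n,k}B_{n+k}^*$.

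The next ingredient is the elementary factorization $x\pinf{n+k} = x\pinf{n}\,(x-n)\pinf{k}$, valid for every $k\in\N_0$ and immediate from the definition of the falling factorial. In view of it, and since $x\pinf{n}$ is a nonzero element of the integral domain $\C[X]$, the identity above is equivalent to
$$
x^{r-1} \;=\; \sum_{k=0}^{r-1}\lambda_{r,n,k}\,(x-n)\pinf{k} ,
$$
that is, to the expansion of the monomial $x^{r-1}$ in the basis $\{(x-n)\pinf{k}\}_{k\ge 0}$ of the polynomials of degree $\le r-1$.

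To obtain this expansion I would substitute $x = y+n$ and combine the binomial theorem with Formula \eqref{eq2}:
$$
x^{r-1} = (y+n)^{r-1} = \sum_{\ell=0}^{r-1}\binom{r-1}{\ell} n^{\ell}\, y^{\,r-1-\ell} = \sum_{\ell=0}^{r-1}\binom{r-1}{\ell} n^{\ell}\sum_{k=0}^{r-1-\ell} S(r-1-\ell,k)\, y\pinf{k} .
$$
Interchanging the two sums and reading off the coefficient of $y\pinf{k}$ gives precisely $\sum_{k=0}^{r-1}\lambda_{r,n,k}\, y\pinf{k}$, with $\lambda_{r,n,k}$ exactly as defined in the statement; undoing the substitution $y = x-n$ yields the desired expansion of $x^{r-1}$, and chaining the three steps concludes the proof.

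I do not expect a genuine obstacle here: the only delicate point is the bookkeeping in the interchange of the double sum (the inner range $0\le \ell\le r-1-k$ arising from the constraint $k\le r-1-\ell$), together with the convention $0^0=1$ needed for the boundary cases $n=0$ and $r=1$, which recover Corollary \ref{c6}. As an alternative route, one could prove the same polynomial identity by an operator computation in the spirit of Theorem \ref{t4}, applying a suitable analytic function of $\Delta$ to $x\pinf{n}$; but the direct integral argument above appears to be the most transparent.
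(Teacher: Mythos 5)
Your argument is correct and follows essentially the same route as the paper's proof: the integral representation $\frac{1}{k+r}=\int_0^1 x^{k+r-1}\,dx$ combined with \eqref{eq13}, the expansion of $x^{r-1}=(x-n+n)^{r-1}$ via the binomial theorem and \eqref{eq2}, and the factorization $x\pinf{n}(x-n)\pinf{k}=x\pinf{n+k}$. The only difference is organizational --- you isolate the underlying polynomial identity and integrate it at the end, whereas the paper performs the same manipulations under the integral sign --- which does not change the substance of the argument.
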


\begin{proof}
Let $r \in \N$ and $n \in \N_0$ be fixed. Starting from the polynomial identity in \eqref{eq1}
$$
\sum_{k = 0}^{n} s(n , k) x^k = x\pinf{n} ,
$$
which we multiply by $x^{r - 1}$ and then integrate from $0$ to $1$, we get
$$
\sum_{k = 0}^{n} \frac{s(n , k)}{k + r} = \int_{0}^{1} x\pinf{n} \cdot x^{r - 1} \, d x .
$$
So, we are led to show the identity
$$
\int_{0}^{1} x\pinf{n} \cdot x^{r - 1} \, d x = \sum_{k = 0}^{r - 1} \lambda_{r , n , k} B_{n + k}^* ,
$$
where the $\lambda_{r , n , k}$'s are defined in the statement of the theorem. Using successively the binomial formula and the polynomial identity \eqref{eq2}, we have that
\begin{align*}
\int_{0}^{1} x\pinf{n} \cdot x^{r - 1} \, d x & = \int_{0}^{1} x\pinf{n} (x - n + n)^{r - 1} \, d x \\
& = \int_{0}^{1} x\pinf{n} \left(\sum_{\ell = 0}^{r - 1} \binom{r - 1}{\ell} (x - n)^{r - 1 - \ell} n^{\ell}\right) \, d x \\
& = \sum_{\ell = 0}^{r - 1} \left(\binom{r - 1}{\ell} n^{\ell} \int_{0}^{1} x\pinf{n} (x - n)^{r - 1 - \ell} \, d x\right) \\
& = \sum_{\ell = 0}^{r - 1} \left(\binom{r - 1}{\ell} n^{\ell} \int_{0}^{1} x\pinf{n} \left(\sum_{k = 0}^{r - 1 - \ell} S(r - 1 - \ell , k) (x - n)\pinf{k}\right) \, d x\right) \\
& = \sum_{\ell = 0}^{r - 1} \sum_{k = 0}^{r - 1 - \ell} \binom{r - 1}{\ell} n^{\ell} S(r - 1 - \ell , k) \int_{0}^{1} x\pinf{n} (x - n)\pinf{k} \, d x .
\end{align*}
But for all $k \in \{0 , 1 , \dots , r - 1 - \ell\}$, we have that $x\pinf{n} (x - n)\pinf{k} = x\pinf{n + k}$, so $\int_{0}^{1} x\pinf{n} (x - n)\pinf{k} \, d x = \int_{0}^{1} x\pinf{n + k} \, d x = B_{n + k}^*$ (according to \eqref{eq13}). Thus
\begin{align*}
\int_{0}^{1} x\pinf{n} \cdot x^{r - 1} \, d x & = \sum_{\ell = 0}^{r - 1} \sum_{k = 0}^{r - 1 - \ell} \binom{r - 1}{\ell} n^{\ell} S(r - 1 - \ell , k) B_{n + k}^* \\
& = \sum_{k = 0}^{r - 1} \left(\sum_{\ell = 0}^{r - 1 - k} \binom{r - 1}{\ell} S(r - 1 - \ell , k) n^{\ell}\right) B_{n + k}^* \\
& = \sum_{k = 0}^{r - 1} \lambda_{r , n , k} B_{n + k}^* ,
\end{align*}
as required. This completes the proof.
\end{proof}

\begin{expls}~
\begin{itemize}
\item By taking $r = 1$ in Theorem \ref{t6}, we simply obtain the formula of Corollary \ref{c6}.
\item By taking $r = 2$ in Theorem \ref{t6}, we obtain the formula
$$
\sum_{k = 0}^{n} \frac{s(n , k)}{k + 2} = n B_n^* + B_{n + 1}^*
$$
(valid for all $n \in \N_0$).
\item By taking $r = 3$ in Theorem \ref{t6}, we obtain the formula
$$
\sum_{k = 0}^{n} \frac{s(n , k)}{k + 3} = n^2 B_n^* + (2 n + 1) B_{n + 1}^* + B_{n + 2}^*
$$
(valid for all $n \in \N_0$).
\end{itemize}
\end{expls}

\begin{rmk}
The formula in Theorem \ref{t6} can be considered an analog of the formula in Theorem \ref{t5}; however, in the formula of Theorem \ref{t6}, the coefficients $\lambda_{r , n , k}$ of the linear combination of the Bernoulli numbers of the second kind depend on $n$ (in contrast to the coefficients of the linear combination of the Bernoulli numbers of the first kind in the formula of Theorem \ref{t5}).
\end{rmk}

\bigskip
\hrule
\bigskip

\noindent 2020 {\it Mathematics Subject Classification}:
Primary 11B68; Secondary 11B73.

\noindent \emph{Keywords}: Bernoulli polynomial, Bernoulli number of the first kind, Bernoulli number of the second kind, Stirling number of the first kind, Stirling number of the second kind.

\bigskip
\hrule
\bigskip

\end{document}